\def\P{\mathbb{P}}
\def\E{\mathbb{E}}
\def\beq*{\begin{eqnarray*}}
\def\eeq*{\end{eqnarray*}}
\def\be{\begin{equation}}
\def\ee{\end{equation}}
\newtheorem{theorem}{Theorem}
\newtheorem{corollary}[theorem]{Corollary}
\newtheorem{proposition}[theorem]{Proposition}
\newtheorem{remark}{Remark}
\author[P. Hitczenko and A. Lohss]{Pawe{\l} Hitczenko\addressmark{1}\thanks{Partially supported by
    a Simons Foundation grant \#208766}\and Amanda Lohss\addressmark{1}}
\title[Polynomial recurrences]{Probabilistic consequences of some  polynomial recurrences}
\address{\addressmark{1}Department of Mathematics, Drexel University, Philadelphia, PA 19104, USA}
\keywords{Generating polynomial, recurrence, tree--like  tableaux}
\begin{document}
\maketitle
\begin{abstract}
\paragraph{Abstract.}
  In this paper, we consider sequences of polynomials that satisfy differential--difference recurrences. Our interest is motivated by the fact that polynomials satisfying such recurrences frequently appear as generating polynomials of integer valued random variables that are of interest in discrete mathematics. It is, therefore, of interest to understand the properties of such polynomials and  their probabilistic consequences. As an illustration   we  analyze  probabilistic properties of  tree--like tableaux, combinatorial objects that are connected to  asymmetric exclusion processes. In particular, we show that the number of diagonal boxes in symmetric tree--like tableaux is asymptotically normal and that the number of occupied corners in a random tree--like tableau is asymptotically Poisson. This extends earlier results of Aval, Boussicault,  Nadeau, and Laborde Zubieta, respectively.

 \end{abstract}


\section{Introduction and motivation}\label{sec:mot} 
In this paper we will consider a sequence of polynomials  \[P_n(x)=\sum_{k=0}^mp_{n,k}x^k,\quad n\ge0\]  
that satisfy a differential--difference recurrence of one of the following forms 
\begin{eqnarray}
\label{rec_prime} P^{'}_n(x)
&=&f_n(x)P_{n-1}(x)+g_n(x)P_{n-1}^{'}(x)\\
\nonumber\mbox{or}\qquad\qquad&&\\
\label{rec}P_n(x)
&=&f_n(x)P_{n-1}(x)+g_n(x)P_{n-1}^{'}(x)
\end{eqnarray}
for some sequences of  polynomials \begin{math}(f_n)\end{math}, \begin{math}(g_n)\end{math} and a given \begin{math}P_0(x)\end{math}.
 
As a motivation for our interest we give examples of recurrences of these types  that we encountered in recent literature. The first two examples appear in the context of tree--like tableaux introduced in \cite{ABN}. 
\begin{itemize}

\item[\textbf{(ABN)}]{} ~\cite{ABN}: 
\begin{eqnarray*}B_{n}(x)&=&nx(x+1)B_{n-1}(x)+x(1-x^2)B'_{n-1}(x),\\ B_{0}(x)&=&x.\end{eqnarray*} 
\item[\textbf{(LZ)\,\, }]{}~\cite{LZ}:
\begin{eqnarray*}
P^{'}_n(x)&=&nP_{n-1}(x)+2(1-x)P_{n-1}^{'}(x),\\ P_0(x)&=&1.\end{eqnarray*}
\end{itemize}
Laborde Zubieta also considered the following version 
\begin{eqnarray*}
Q^{'}_n(x)&=&2nxQ_{n-1}(x)+2(1-x^2)Q_{n-1}^{'}(x),\\ Q_0(x)&=&1,
\end{eqnarray*}
where \begin{math}Q_n(x)\end{math} is a polynomial of degree \begin{math}2n\end{math} whose odd--numbered coefficients vanish. But this recurrence can be reduced to \textbf{(LZ)} by considering \begin{math}Q_n(x)=P_n(x^2)\end{math}.

The following recurrence for fixed parameters $a$ and $b$ was considered in~\cite{HJ} (see Sections~2 and~4 there):
\begin{itemize}
\item[\textbf{(HJ)}]{}~\cite{HJ}: 
\begin{eqnarray*}P_{n,a,b}(x)&=&((n-1+b)x+a)P_{n-1,a,b}(x)+x(1-x)P'_{n-1,a,b}(x)\\ P_{0,a,b}(x)&=&1.\end{eqnarray*} 
\end{itemize}
This is a generaliztion of the classical Eulerian polynomials.  Specifically,  the choice of parameters \begin{math}a=1\end{math} and \begin{math}b=0\end{math} gives \begin{math}P_{n,1,0}=E_n(x)\end{math}, where
\[E_n(x)=\sum_{k=0}^n\left<n\atop k\right>x^k,\]
 and \begin{math}\left< n\atop k\right>\end{math} is the number of permutations of \begin{math}\{1,\dots,n\}\end{math} with exactly \begin{math}k\end{math} ascents. The  recurrence for the polynomials \begin{math}E_n(x)\end{math}  is:
 \[E_n(x)=((n-1)x+1)E_{n-1}(x)+x(1-x)E'_{n-1}(x).\]
A very similar recurrence played a role in \cite{DHH} although it appeared there only implicitly.
\begin{itemize}
\item[\textbf{(DHH)}]{}~\cite{DHH}:
\begin{eqnarray*}
V_n(x)&=&((2n-1)x+1)V_{n-1}(x)+2x(1-x)V'_{n-1}(x) \\ V_0(x)&=& 1.
\end{eqnarray*}
\end{itemize}
As one more example, the following recurrence was used in~\cite[Section~3]{AH} in connection with the analysis of a version of a card game called the memory game. 
\begin{itemize}
\item[\textbf{(AH)}]{}~\cite{AH}:
\begin{eqnarray*}
A_n(x)&=&(2n-1)A_{n-1}(x)+x(x-1)A_{n-1}^{'}(x),\\ A_0(x)&=&x.
\end{eqnarray*}

\end{itemize}
In the examples above the polynomials are generating polynomials of integer valued random variables and it is of interest to understand what bearing the form of a recurrence has on the probabilistic properties of these  random variables. This is, of course, not a new idea and in various forms has been studied for a long time (see, for example, many results and references in \cite{FS}). Still, we believe that there is more work to be done to better understand the probabilistic consequences of the above recurrences. 
\section{Tree--like tableaux}
Although we would like to keep the discussion at a general level, we will use particular objects, namely tree--like tableaux as a primary illustration. Therefore we briefly introduce the definition and their basic properties; we refer the reader to \cite{ABN, LZ,HL} for more information and details.

A \emph{Ferrers diagram} is a left--aligned finite set of cells arranged in rows and columns  with weakly decreasing number of cells in rows. Its \emph{half--perimeter}  is the number of rows plus the number of columns. 
The \emph{border edges} of a Ferrers diagram are the edges of the southeast border, and the number of border edges is equal to the half--perimeter. 
A \emph{tree--like tableaux} of size \begin{math}n\end{math} 
is a Ferrers diagrams of half-perimeter \begin{math}n+1\end{math}  with some cells (called pointed cells) filled with a point according to the following rules:
\begin{enumerate}
\item The cell in the first column and first row is always pointed (this point is known as the root point). \label{T1}
\item Every row and every column contains at least one pointed cell. \label{T2}
\item For every pointed cell, all the cells above are empty or all the cells to the left are empty. \label{T3}
\end{enumerate}

We will also  consider \emph{symmetric tree--like tableaux}, a subset of tree--like tableaux which are symmetric about their main diagonal (see \cite[Section 2.2]{ABN} for more details). 
As noticed in \cite{ABN}, the size of a symmetric tree--like tableaux must be odd.
It is known that there are \begin{math}n!\end{math}  tree--like tableaux of size \begin{math}n\end{math}  (see \cite[Corollary~8]{ABN}) and \begin{math} 2^nn!\end{math}  symmetric tree--like tableaux of size \begin{math}2n+1\end{math}  (see \cite[Corollary~8]{ABN}). 

\emph{Corners} of a  tree--like tableau (symmetric or not)  are the cells in which both  the right and bottom edges are border edges. \emph{Occupied corners} are corners that contain a point. Figure~\ref{pics} shows examples of tree--like tableaux.

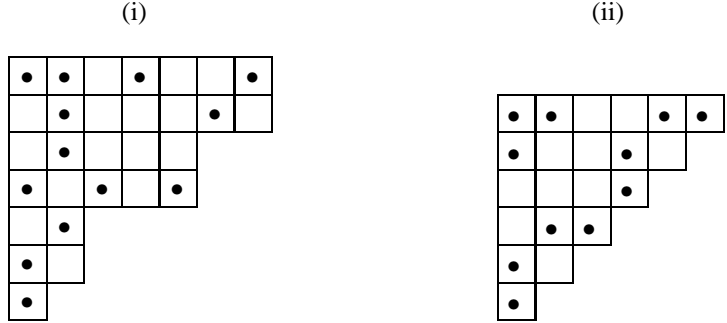
\begin{figure}
\setlength{\unitlength}{.5cm}
\begin{center}
\begin{picture} (22,8)

\put(3, 8){(i)}
\put(15.5, 8){(ii)}
\put(0,0){\line(0,1){7}}
\put(1,0){\line(0,1){7}}
\put(2,1){\line(0,1){6}}
\put(3,3){\line(0,1){4}}
\put(4,3){\line(0,1){4}}
\put(5,3){\line(0,1){4}}
\put(6,5){\line(0,1){2}}
\put(7,5){\line(0,1){2}}

\put(0,7){\line(1,0){7}}
\put(0,6){\line(1,0){7}}
\put(0,5){\line(1,0){7}}
\put(0,4){\line(1,0){5}}
\put(0,3){\line(1,0){5}}
\put(0,2){\line(1,0){2}}
\put(0,1){\line(1,0){2}}
\put(0,0){\line(1,0){1}}

\put(4.3, 3.3){$\bullet$}
\put(5.3, 5.3){$\bullet$}
\put(6.3, 6.3){$\bullet$}
\put(3.3, 6.3){$\bullet$}
\put(2.3, 3.3){$\bullet$}
\put(1.3, 2.3){$\bullet$}
\put(1.3, 4.3){$\bullet$}
\put(1.3, 5.3){$\bullet$}
\put(1.3, 6.3){$\bullet$}
\put(0.3, 3.3){$\bullet$}
\put(0.3, 1.3){$\bullet$}
\put(0.3, 0.3){$\bullet$}
\put(0.3, 6.3){$\bullet$}

\put(13,0){\line(0,1){6}}
\put(14,0){\line(0,1){6}}
\put(15,1){\line(0,1){5}}
\put(16,2){\line(0,1){4}}
\put(17,3){\line(0,1){3}}
\put(18,4){\line(0,1){2}}
\put(19,5){\line(0,1){1}}

\put(13,6){\line(1,0){6}}
\put(13,5){\line(1,0){6}}
\put(13,4){\line(1,0){5}}
\put(13,3){\line(1,0){4}}
\put(13,2){\line(1,0){3}}
\put(13,1){\line(1,0){2}}
\put(13,0){\line(1,0){1}}

\put(13.25, 0.25){$\bullet$}
\put(13.25, 1.25){$\bullet$}
\put(13.25, 4.25){$\bullet$}
\put(13.25, 5.25){$\bullet$}
\put(14.25, 2.25){$\bullet$}
\put(14.25, 5.25){$\bullet$}
\put(15.25, 2.25){$\bullet$}
\put(16.25, 3.25){$\bullet$}
\put(16.25, 4.25){$\bullet$}
\put(17.25, 5.25){$\bullet$}
\put(18.25, 5.25){$\bullet$}

\end{picture}
\caption{(i) A tree--like tableaux of size $13$   with $4$ corners and $2$ occupied corners. (ii) A  symmetric tree--like tableaux of size $11$ with $6$ corners, $4$ of which are occupied.\label{pics}}
\end{center}
\end{figure}

\section{General setting}
Motivated by  examples discussed in Section~\ref{sec:mot} we wish to consider a sequence of polynomials  \[P_n(x)=\sum_{k=0}^mp_{n,k}x^k,\quad n\ge0\]  
that satisfy one of the  recurrences (\ref{rec_prime}) or (\ref{rec})
 with the given initial polynomial \begin{math}P_0(x)\end{math}. The sequences of polynomials \begin{math}(f_n(x))\end{math}  and \begin{math}(g_n(x))\end{math}  are typically of low degree, but formally this is not required. Similarly, 
in all of the above examples we have  \begin{math}g_n(1)=0\end{math}  and we will assume that throughout. It should be emphasized, however, that there are natural situations in which the condition \begin{math}g_n(1)=0\end{math}  fails. For example,~\cite{W} considered a recurrence
\[T_n(x)=(x+c)T_{n-1}(x)+mxT^{'}_{n-1}(x),\]
for fixed numbers \begin{math}c\end{math}  and \begin{math}m\end{math}. The choice \begin{math}c=0\end{math}  and \begin{math}m=1\end{math}  is a classical situation of Bell polynomials (see e.~g. a discussion at the end of Section~7.2 in Chapter~VII of \cite{C}). Furthermore, the choice \begin{math}c=1\end{math}  and any fixed  \begin{math}m\in \Bbb N\end{math}  gives polynomials associated with Whitney numbers of Dowling lattices (see \cite{Be2}). For polynomials satisfying
\[F_n(x)=(x+1)F_{n-1}(x)+x(x+m)F_{n-1}'(x)\]
with \begin{math}m\in\Bbb N\end{math}  we refer to \cite[Section~4]{Be1} and references therein. So, clearly it is of interest to consider (\ref{rec_prime}) or (\ref{rec}) without the assumption that \begin{math}g_n(1)=0\end{math}  but as we indicated earlier we will assume this throughout this paper. 

Since we are interested in a probabilistic interpretation, we will assume that \begin{math}p_{n,k}\ge0\end{math}  and that 
\begin{math} \sum_kp_{n,k}>0\end{math}  for every \begin{math}n\end{math}. Then
\[\frac{P_n(x)}{P_n(1)}=\sum_{k\ge0}\frac{p_{n,k}}{P_n(1)}x^k\]
is the probability generating function of the integer valued random variable \begin{math}X_n\end{math}  whose distribution function is given  by 
\begin{equation}\label{rv}\P(X_n=k)=\frac{p_{n,k}}{P_n(1)},\quad k\ge0.\end{equation}

We note that  recurrence (\ref{rec_prime}) defines the polynomials \begin{math}P_n\end{math}  up to an additive constant or, equivalently, up to the value \begin{math}P_n(1)\end{math}. In our context the polynomials  arise in the study of discrete combinatorial structures, and thus a natural choice of  the normalization is obtained by letting \begin{math}P_n(1)\end{math}  be the cardinality of the structure consisting of all objects of size \begin{math}n\end{math}. For example, Laborde Zubieta set \begin{math}P_n(1)=n!\end{math}  and \begin{math}Q_n(1)=2^nn!\end{math}  representing the number of tree--like tableaux of size \begin{math}n\end{math}  and the symmetric tree--like tableaux of size \begin{math}2n+1\end{math} , respectively.   

We want to use recurrences (\ref{rec_prime}) and (\ref{rec}) to study the convergence in distribution of the sequences \begin{math}(X_n)\end{math}  associated with these recurrences through (\ref{rv}). 

\section{Method of moments}
One natural approach is to use the method of moments or, more precisely, the method of factorial moments. It is based on the fact that if \begin{math}X\end{math}  is a random variable uniquely determined by its (factorial) moments 
\[\E(X)_r=\E X(X-1)\dots(X-(r-1)), \quad r=1,2,\dots\]
and \begin{math}(X_n)\end{math}  is a sequence of random variables such that 
\[\E(X_n)_r\longrightarrow\E(X)_r,\quad n\to\infty,\quad r=1,2,\dots\]
then 
\[X_n\stackrel d\longrightarrow X,\quad n\to\infty,\]
where \lq\lq\begin{math}\stackrel d\longrightarrow\end{math} \rq\rq\ denotes the convergence in distribution.

As is well--known, for a random variable 
 \begin{math}X\end{math}  with probability generating function \begin{math}h(x)=\E x^X\end{math}  we have 
\[\E(X)_r=h^{(r)}(1),\]
where \begin{math}h^{(r)}(x)\end{math}  is the \begin{math}r^{th}\end{math}  derivative of \begin{math}h(x)\end{math}. 
Thus, in terms of polynomials \begin{math}(P_n(x))\end{math}  this means
\[\E(X_n)_r=\frac{P_n^{(r)}(1)}{P_n(1)}\]
and consequently, we would be interested in computing \begin{math}P_n^{(r)}(1)\end{math}  and finding the asymptotic of the ratio on the right--hand side above.

 For  recurrence  (\ref{rec_prime}) using Leibniz  formula for higher order derivative of the product we obtain
\begin{eqnarray*}
P_n^{(r)}(x)&=&\left(P_n'(x)\right)^{(r-1)}=\left(f_n(x)P_{n-1}(x)\right)^{(r-1)}+\left(g_n(x)P_{n-1}^{'}(x)\right)^{(r-1)}\\&=&
\sum_{k=0}^{r-1}{r-1\choose k}f_n^{(k)}(x)P_{n-1}^{(r-1-k)}(x)+
\sum_{k=0}^{r-1}{r-1\choose k}g_n^{(k)}(x)P_{n-1}^{(r-k)}(x)\\&=&
g_n(x)P_{n-1}^{(r)}(x)+\sum_{k=0}^{r-2}\left({r-1\choose k}f_n^{(k)}(x)+{r-1\choose k+1}g_n^{(k+1)}(x)\right)P_{n-1}^{(r-1-k)}(x)\\&&\qquad +f_n^{(r-1)}(x)P_{n-1}(x).
\end{eqnarray*}
The idea now is that  if \begin{math}f_n\end{math}  and \begin{math}g_n\end{math}  are low--degree polynomials  then one obtains a manageable  recurrence for \begin{math}P_n^{(r)}(1)\end{math}. We will illustrate this on Laborde Zubieta's example {\textbf(LZ)}. 
In that case \begin{math}f_n(x)\end{math}  and \begin{math}g_n(x)\end{math}  are  polynomials of degree zero and one, respectively and thus the above expression reduces to 
\be\label{red_diff}P_n^{(r)}(x)=g_n(x)P_{n-1}^{(r)}(x)+\left(f_n(x)+(r-1)g_n'(x)\right)P_{n-1}^{(r-1)}(x)
\ee
if \begin{math}r\ge2\end{math}  (and agrees with (\ref{rec_prime}) if \begin{math}r=1\end{math}).
 Laborde Zubieta used this, the specific form of the polynomials \begin{math}f_n(x)\end{math}, $g_n(x)$, and \begin{math}P_n(1)=n!\end{math}  to show that the random variables \begin{math}X_n\end{math}  defined by (\ref{rv})
 satisfy
 \[\E X_n=1\quad\mbox{and}\quad\textrm{var}(X_n)=\frac{n-2}n.\]   
This  suggests that the sequence \begin{math}(X_n)\end{math}  converges in distribution to a Poisson random variable with parameter 1. This is, indeed the case, and can be deduced from the recurrence (\ref{rec_prime}) as was shown in \cite{HL}.  Here is a general  statement that covers \textbf{(LZ)}.

\begin{proposition}\label{prop:pol_recur}
Let \[P_n(x)=\sum_{k=0}^mp_{n,k}x^k\] 
be a sequence of polynomials  satisfying  recurrence (\ref{rec_prime})
where \begin{math}f_n(x)=f_n\end{math}  and \begin{math}g_n(x)=g_n\cdot(x-1)\end{math}  for some sequences of constants \begin{math}(f_n)\end{math}  and \begin{math}(g_n)\end{math}. Assume that \begin{math}p_{n,k}\ge0\end{math}  and that  \begin{math}\sum_kp_{n,k}>0\end{math}  for every \begin{math}n\ge 1\end{math}, and that  \begin{math}m=m_n\end{math}   may depend on \begin{math}n\end{math}.  Consider a sequence of random variables \begin{math}(X_n)\end{math}  defined by (\ref{rv}).
If 
\begin{equation}\label{pol_ass}g_n=o(f_n)\quad\mbox{ and}\quad
f_n\frac{P_{n-1}(1)}{P_{n}(1)}\to c>0,\quad \mbox{as}\quad n\to\infty
\end{equation}
then 
\[X_n\stackrel{d}{\rightarrow}\textrm{Pois}(c)\quad \mbox{as} \quad n\to\infty,
\]
where \begin{math}\textrm{Pois}(c)\end{math}  is a Poisson random variable with parameter \begin{math}c\end{math}.
\end{proposition}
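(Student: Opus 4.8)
The plan is to apply the method of factorial moments set up above. Since the $\textrm{Pois}(c)$ distribution is uniquely determined by its moments and has factorial moments $\E(\textrm{Pois}(c))_r=c^r$, it suffices to show that $\E(X_n)_r=P_n^{(r)}(1)/P_n(1)\to c^r$ for every fixed $r\ge1$.

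First I would specialize the reduced recurrence (\ref{red_diff}) to the present setting. With $f_n(x)=f_n$ and $g_n(x)=g_n(x-1)$ we have $g_n'(x)=g_n$, so (\ref{red_diff}) reads
\[
P_n^{(r)}(x)=g_n(x-1)P_{n-1}^{(r)}(x)+\bigl(f_n+(r-1)g_n\bigr)P_{n-1}^{(r-1)}(x),\qquad r\ge2,
\]
while $r=1$ is just (\ref{rec_prime}) itself. The key observation is that evaluating at $x=1$ kills the first term on the right, since $g_n(1-1)=0$, leaving the one-term relation
\[
P_n^{(r)}(1)=\bigl(f_n+(r-1)g_n\bigr)\,P_{n-1}^{(r-1)}(1),\qquad r\ge1
\]
(for $r=1$ the factor $(r-1)g_n$ vanishes and this reduces to $P_n'(1)=f_nP_{n-1}(1)$).

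Dividing by $P_n(1)$ and writing $M_{n,r}:=\E(X_n)_r=P_n^{(r)}(1)/P_n(1)$, this becomes
\[
M_{n,r}=\bigl(f_n+(r-1)g_n\bigr)\frac{P_{n-1}(1)}{P_n(1)}\,M_{n-1,r-1}.
\]
By the hypotheses (\ref{pol_ass}) we have $f_n\,P_{n-1}(1)/P_n(1)\to c$, and since $g_n=o(f_n)$ the correction satisfies $(r-1)g_n\,P_{n-1}(1)/P_n(1)=(r-1)\tfrac{g_n}{f_n}\bigl(f_n\,P_{n-1}(1)/P_n(1)\bigr)\to0$; hence the prefactor tends to $c$ for each fixed $r$. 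An induction on $r$ then finishes the argument: the base case is $M_{n,0}=P_n(1)/P_n(1)=1\to c^0$, and if $M_{n-1,r-1}\to c^{r-1}$, then $M_{n,r}\to c\cdot c^{r-1}=c^r$.

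The computation is short once the recurrence is in place, so I do not expect a genuine obstacle in the estimates. The substantive point is recognizing that the standing assumption $g_n(1)=0$, here built into $g_n(x)=g_n(x-1)$, decouples the factorial-moment recurrence into a single product, which is precisely what makes the limits factor as powers of $c$. The only step needing care is the appeal to the method of moments: one must invoke that convergence of all factorial moments to those of a $\textrm{Pois}(c)$ random variable, a distribution uniquely determined by its moments, suffices for convergence in distribution, which is the criterion recalled at the start of this section.
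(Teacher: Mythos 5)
Your proof is correct and follows essentially the same route as the paper: both specialize the reduced recurrence (\ref{red_diff}) at $x=1$, using $g_n(1)=0$ to obtain the one-term relation $P_n^{(r)}(1)=\bigl(f_n+(r-1)g_n\bigr)P_{n-1}^{(r-1)}(1)$, and then conclude $\E(X_n)_r\to c^r$ via the method of factorial moments. The only (cosmetic) difference is that the paper unwinds the relation into an explicit $r$-fold product terminating in $P_{n-r}(1)/P_{n-r}(1)=1$, while you package the same iteration as an induction on $r$.
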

As established by~\cite{LZ}, the generating polynomials for the number of occupied corners in tree--like tableaux satisfy recurrence \textbf{(LZ)} (that means taking  \begin{math}f_n=n\end{math}, \begin{math}g_n=-2\end{math}, and  \begin{math}P_n(1)=n!\end{math}  in Proposition~\ref{prop:pol_recur}). 
Thus, the assumptions (\ref{pol_ass})  are clearly satisfied with \begin{math}c=1\end{math}  and we obtain the following extension of Laborde Zubieta's result (see \cite{HL})
\begin{corollary} 
As \begin{math}n\to\infty\end{math},  the limiting distribution of the number of occupied corners in a random tree--like tableau of size \begin{math}n\end{math}  is $\textrm{Pois}(1)$.
\end{corollary}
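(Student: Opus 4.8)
The plan is to recognize the statement as a direct application of Proposition~\ref{prop:pol_recur}, so that no new limit theorem has to be proved. Once the generating polynomial for occupied corners is identified, its recurrence is cast in the form~(\ref{rec_prime}) required by the proposition, and the two hypotheses in~(\ref{pol_ass}) are checked, the Poisson limit follows immediately with parameter $c=1$.

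First I would fix the enumerative setup. For $n\ge1$ let $p_{n,k}$ be the number of tree--like tableaux of size $n$ with exactly $k$ occupied corners, and set $P_n(x)=\sum_{k\ge0}p_{n,k}x^k$. Each tree--like tableau of size $n$ contributes to exactly one coefficient, so $P_n(1)=\sum_k p_{n,k}$ is the total number of tree--like tableaux of size $n$, which equals $n!$ by~\cite[Corollary~8]{ABN}; hence $P_n(x)/P_n(1)$ is the probability generating function of the variable $X_n$ from~(\ref{rv}), the number of occupied corners in a uniformly random tree--like tableau of size $n$. By~\cite{LZ} these polynomials satisfy~\textbf{(LZ)}, namely $P_n'(x)=nP_{n-1}(x)+2(1-x)P_{n-1}'(x)$ with $P_0(x)=1$. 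This is exactly recurrence~(\ref{rec_prime}) in the special form demanded by Proposition~\ref{prop:pol_recur}: the coefficient of $P_{n-1}$ is the constant $f_n(x)=f_n=n$, and the coefficient of $P_{n-1}'$ is $g_n(x)=2(1-x)=g_n(x-1)$ with $g_n=-2$. The nonnegativity requirement is automatic, since the $p_{n,k}$ are cardinalities, and $\sum_k p_{n,k}=n!>0$ for every $n\ge1$.

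It then remains to verify~(\ref{pol_ass}). The first condition holds because $g_n=-2=o(n)=o(f_n)$. For the second, $f_n\,P_{n-1}(1)/P_n(1)=n\cdot(n-1)!/n!=1$ for all $n$, so the limit is $c=1>0$. Proposition~\ref{prop:pol_recur} now yields $X_n\stackrel{d}{\rightarrow}\textrm{Pois}(1)$, which is the assertion.

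The only genuinely substantive input is the recurrence~\textbf{(LZ)} itself, and that is the step I would treat as the main obstacle were it not already available from~\cite{LZ}. Its proof is combinatorial: one uses the insertion procedure that grows a tree--like tableau of size $n$ from one of size $n-1$ (the same procedure underlying the count $P_n(1)=n!$) and tracks how the number of occupied corners changes under each insertion. The appearance of the derivative $P_{n-1}'$ on the right-hand side reflects that part of the construction distinguishes an occupied corner of the smaller tableau, while the factor $(1-x)$ records the net change of weight when such a corner is created or destroyed; the term $nP_{n-1}(x)$ collects the contributions that leave the corner count intact. Since the excerpt already supplies~\textbf{(LZ)}, I would simply cite it and reduce the corollary to the three short verifications above.
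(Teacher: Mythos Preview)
Your proposal is correct and follows exactly the paper's approach: identify the \textbf{(LZ)} recurrence as the special case $f_n=n$, $g_n=-2$, $P_n(1)=n!$ of Proposition~\ref{prop:pol_recur}, verify the two conditions in~(\ref{pol_ass}) with $c=1$, and conclude. The paper does precisely this (with the combinatorial justification of the recurrence likewise deferred to~\cite{LZ}), so there is nothing to add.
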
 
A companion result for symmetric tableaux is as follows (see \cite{HL} for more details). The expected value and the variance were obtained earlier in~\cite{LZ}. 
\begin{corollary}
As \begin{math}n\to\infty\end{math},  the limiting distribution of the number of occupied corners in a random symmetric tree--like tableau of size \begin{math}2n+1\end{math}  is \begin{math}2\times\textrm{Pois}(1/2)\end{math}.
\end{corollary}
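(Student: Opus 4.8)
The plan is to reduce the symmetric case to the ordinary \textbf{(LZ)} setting via the substitution already recorded in Section~\ref{sec:mot}, and then to apply Proposition~\ref{prop:pol_recur} with a \emph{different} normalization. As established by~\cite{LZ}, the generating polynomial of the number of occupied corners in symmetric tree--like tableaux of size $2n+1$ is $Q_n$, where $Q_n'(x)=2nxQ_{n-1}(x)+2(1-x^2)Q_{n-1}'(x)$, $Q_0(x)=1$, and $Q_n(1)=2^nn!$. First I would point out that this recurrence does \emph{not} fit the hypotheses of Proposition~\ref{prop:pol_recur} directly: here $f_n(x)=2nx$ is not constant and $g_n(x)=2(1-x^2)=-2(x-1)(x+1)$ is not of the form $g_n\cdot(x-1)$. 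The role of the reduction $Q_n(x)=P_n(x^2)$ is exactly to pass to a sequence $(P_n)$ that does fit.

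Substituting $Q_n(x)=P_n(x^2)$ and cancelling a factor of $2x$ turns the $Q_n$--recurrence into the \textbf{(LZ)} recurrence $P_n'(y)=nP_{n-1}(y)+2(1-y)P_{n-1}'(y)$ with $P_0(y)=1$; this is (\ref{rec_prime}) with the constant $f_n=n$ and the linear term $g_n\cdot(y-1)$ for $g_n=-2$. Because (\ref{rec_prime}) fixes $P_n$ only up to an additive constant, the normalization relevant here is $P_n(1)=Q_n(1)=2^nn!$, which differs from the value $n!$ used for ordinary tableaux. Keeping this normalization straight is the crux of the argument: it is what will turn the Poisson parameter from $1$ into $1/2$.

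Next I would relate the two random variables at the level of probability generating functions. Writing $Y_n$ for the number of occupied corners in a random symmetric tableau of size $2n+1$ and letting $Z_n$ be the variable attached to $(P_n)$ through (\ref{rv}), the identity $Q_n(x)=P_n(x^2)$ gives $\E x^{Y_n}=Q_n(x)/Q_n(1)=P_n(x^2)/P_n(1)=\E x^{2Z_n}$, so that $Y_n=2Z_n$ in distribution. (This is consistent with the vanishing of the odd--indexed coefficients of $Q_n$.)

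Finally I would apply Proposition~\ref{prop:pol_recur} to $(Z_n)$. The first hypothesis $g_n=o(f_n)$ holds since $-2=o(n)$, and the second is the computation $f_n\,P_{n-1}(1)/P_n(1)=n\cdot 2^{n-1}(n-1)!/(2^nn!)=\frac{1}{2}$, so $c=1/2$ and $Z_n\stackrel{d}{\rightarrow}\textrm{Pois}(1/2)$. Combining this with $Y_n=2Z_n$ yields $Y_n\stackrel{d}{\rightarrow}2\times\textrm{Pois}(1/2)$, which is the claim. I do not expect a genuine obstacle beyond this bookkeeping: the single place where care is essential is to use $P_n(1)=2^nn!$ rather than $n!$, since the ordinary normalization would erroneously produce parameter $1$.
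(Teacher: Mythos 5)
Your proposal is correct and is exactly the argument the paper has in mind: the paper states the corollary without proof (deferring details to \cite{HL}), but it explicitly supplies every ingredient you use --- the reduction $Q_n(x)=P_n(x^2)$ to \textbf{(LZ)} in Section~\ref{sec:mot}, the normalization $Q_n(1)=2^nn!$, and Proposition~\ref{prop:pol_recur}. Your computation $f_n P_{n-1}(1)/P_n(1)=n\cdot 2^{n-1}(n-1)!/(2^n n!)=1/2$ and the identification $Y_n\stackrel{d}{=}2Z_n$ via $\E x^{Y_n}=P_n(x^2)/P_n(1)$ are both right, so nothing is missing.
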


\begin{proof}[
of Proposition~\ref{prop:pol_recur}]
By \cite[Theorem~20, Chapter~1]{B} it is enough to  show that for every \begin{math}r\ge1\end{math}   the factorial moments 
\[\E(X_n)_r=\E X_n(X_n-1)\dots(X_n-(r-1)),\]
of \begin{math}(X_n)\end{math}  converge to \begin{math}c^r\end{math}  as \begin{math}n\to\infty\end{math}.       
Using \begin{math}g_n(1)=0\end{math}  and \begin{math}g_n'(x)=g_n\end{math}  in (\ref{red_diff}) we obtain 
\beq*
P_n^{(r)}(1)&=&\left(f_n+(r-1)g_n\right)P_{n-1}^{(r-1)}(1).
\eeq*
Consequently, 
\beq*
\frac{P_n^{(r)}(1)}{P_n(1)}&=&(f_n+(r-1)g_n)\frac{P_{n-1}^{(r-1)}(1)}{P_{n}(1)}\\
&=&f_n\frac{P_{n-1}(1)}{P_{n}(1)}\left(1+(r-1)\frac{g_n}{f_n}\right)
\frac{P_{n-1}^{(r-1)}(1)}{P_{n-1}(1)}.
\eeq*
Therefore, upon further iteration,
\beq*
\frac{P_n^{(r)}(1)}{P_n(1)}&=&\left(\prod_{k=0}^{r-1}f_{n-k}{\frac{P_{n-k-1}(1)}{P_{n-k}(1)}\left(1+(r-k-1)\frac{g_{n-k}}{f_{n-k}}\right)}\right)\frac{P_{n-r}^{(r-r)}(1)}{P_{n-r}(1)}.
\eeq*
Since the last factor is \begin{math}1\end{math}, it follows from (\ref{pol_ass}) that for every \begin{math}r\ge 1\end{math}  as 
 \begin{math}n\rightarrow \infty\end{math},
\beq*
\frac{P_n^{(r)}(1)}{P_n(1)}&\rightarrow&c^r
\eeq*
as desired.
\end{proof}

\begin{remark} In principle it should  be possible to prove a similar result  for polynomials of higher degrees than those considered in Proposition~\ref{prop:pol_recur}. However, we have not tried to do that, primarily because we have not encountered instances of such recurrences.
\end{remark}

\section{Real--rootedness of $P_n(x)$}
The idea we explore in this section is that if all roots of \begin{math}P_n(x)\end{math}  are real then \begin{math}P_n(x)\end{math}  can be written as a product of linear factors. Furthermore, since the coefficients are non--negative the roots are non--positive. Hence, these linear factors may be interpreted as the generating functions of \begin{math}\{0,1\}\end{math}--valued random variables and then knowing that the variance of their sum converges to infinity suffices to conclude that the sum is asymptotically normal. More specifically, assume that  
\[-\infty<\gamma_{i,n}\le0, i=1,\dots,m\]
are roots of \begin{math}P_n(x)\end{math}  and write \begin{math}\pi_{i,n}=-\gamma_{i,n}\end{math}  so that \begin{math}\pi_{i,n}\ge0\end{math}.
Then \begin{math}P_n(x)\end{math}  has a factorization 
\[P_n(x)=p_{n,m}\prod_{k=1}^m(x+\pi_{k,n}),\]
so that 
\[\E x^{X_n}=\frac{P_n(x)}{P_n(1)}=\prod_{k=1}^m\frac{x+\pi_{k,n}}{1+\pi_{k,n}}=\prod_{k=1}^m\left(\frac x{1+\pi_{k,n}}+\frac{\pi_{k,n}}{1+\pi_{k,n}}\right).
\] 
The factor on the right--hand side is the probability generating function of a random variable \begin{math}\xi_{k,n}\end{math}  such that 
\[\P(\xi_{k,n}=1)=\frac1{1+\pi_{k,n}} \quad\mbox{and}\quad\P(\xi_{k,n}=0)=\frac{\pi_{k,n}}{1+\pi_{k,n}},\quad k=1,\dots,m.\]
Moreover, since the product of the probability generating functions corresponds to taking sums of independent random variables 
we have that 
\[X_n=\sum_{k=1}^n\xi_{k,n},\]
where \begin{math}(\xi_{k,n})\end{math}  are independent. Therefore, it follows immediately from either Lindeberg or Lyapunov version of the central limit theorem (see e.~g. \cite[Theorem~27.2 or Theorem~27.3]{Bil}) that 
\[\frac{X_n-\E X_n}{\sqrt{\textrm{var}(X_n)}}\stackrel d\longrightarrow N(0,1),\]
as long as \begin{math}\textrm{var}(X_n)\longrightarrow\infty\end{math}  as \begin{math}n\to\infty\end{math}. (Here \begin{math}N(0,1)\end{math}  denotes the standard normal random variable.)

Since showing that the variance of \begin{math}X_n\end{math}  tends to infinity is generally not difficult from the recurrences (\ref{rec_prime}) and (\ref{rec}), the main issue is real--rootedness of \begin{math}P_n(x)\end{math}. This is, of course, not a new idea and the problem has a very long history and the questions of real--rootedness for many families of classical polynomials have been settled long time ago.  In particular, in the context the present discussion,  the proof that all roots of polynomials {\bf(HJ)} are real was a slight modification of the proof  for the Eulerian polynomials given by~\cite{F} more than hundred years ago. Nonetheless, the techniques seem to be tailored to the particular cases at hand. As far as general criteria for the real--rootedness of a family of recursively defined polynomials, not much seem to have been known until two relatively recent papers \cite{DDJ,LW}.    The first  concerns recurrence (\ref{rec}) and requires  \begin{math}f_n(x)\end{math}  and \begin{math}g_n(x)\end{math}  to have degrees at most one and two, respectively. The second, when specified to generality of (\ref{rec}) does not put any restrictions on the degrees of \begin{math}f_n(x)\end{math}  and \begin{math}g_n(x)\end{math}  but requires that \begin{math}g_n(x)<0\end{math}  whenever \begin{math}x\le0\end{math}. While many of the real--rootedness  results for classical polynomials may obtained from one of  these criteria (and sometimes from both, e.~g.~Eulerian or Bell polynomials) some  are not covered by them. In particular, neither \cite{DDJ} nor \cite{LW} applies to our first example {\bf(ABN)}. Yet, as it turns out a modification of methods developed in \cite{DDJ} may be used to show that the polynomials \begin{math}B_n(x)\end{math}  defined by \textbf{(ABN)} do, indeed,  have all roots real.  We will not prove it in this extended abstract, instead referring the reader to the full version of this paper.

\section{Asymptotic normality of the number of diagonal boxes in symmetric tree--like tableaux}
In this section we analyze the recurrence {\bf(ABN)}. The  polynomials 
\[B_{n}(x)=\sum_{k=1}^{n+1}B(n,k)x^k,\quad n\ge0,\] 
were introduced in \cite[Section~3.2]{ABN} and are the generating polynomials for the number of diagonal cells in symmetric tree--like tableaux of size \begin{math}2n+1\end{math}   (that is to say that \begin{math}B(n,k)\end{math}  is the number of symmetric tree--like tableaux of size \begin{math}2n+1\end{math}  with \begin{math}k\end{math}  diagonal cells). As was shown  in \cite{ABN}  \begin{math}(B_n(x))\end{math}  satisfy the recurrence \textbf{(ABN)} and it follows readily from that that the expected number of diagonal cells  in  symmetric tableaux of size \begin{math}2n+1\end{math}  is \begin{math}3(n+1)/4\end{math}  (see \cite[Proposition~19]{ABN}). Continuing that work, we find the expression for the variance and show that the number 
of diagonal cells is asymptotically normal. The precise statement is as follows. 
\begin{theorem}\label{thm:clt} Let \begin{math}D_n\end{math}  be the number of diagonal boxes in a random symmetric tree--like tableau of size \begin{math}2n+1\end{math}. Then, as \begin{math}n\to\infty\end{math} 
\[\frac{D_n-3(n+1)/4}{\sqrt{7(n+1)/48}}\stackrel d\longrightarrow N(0,1).\]
\end{theorem}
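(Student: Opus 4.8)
The plan is to use the real--rootedness machinery of Section~5 rather than the method of factorial moments. Recurrence \textbf{(ABN)} has the form (\ref{rec}) with $f_n(x)=nx(x+1)$ and $g_n(x)=x(1-x^2)$, and indeed $g_n(1)=0$ as assumed throughout. The single essential input is that every $B_n(x)$ has only real roots; as indicated earlier this follows from a modification of the argument of \cite{DDJ}, and I take it as given. Granting real--rootedness, the factorization $B_n(x)=p_{n,n+1}\prod_{k=1}^{n+1}(x+\pi_{k,n})$ with $\pi_{k,n}\ge0$ lets me write, exactly as in Section~5, $D_n=\sum_{k}\xi_{k,n}$ as a sum of independent $\{0,1\}$--valued random variables. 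By the Lindeberg (or Lyapunov) central limit theorem the normalized $D_n$ then converges to $N(0,1)$ provided $\textrm{var}(D_n)\to\infty$, with centering $\E D_n$ and scaling $\sqrt{\textrm{var}(D_n)}$. So the theorem reduces to identifying these two quantities.

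The mean is already known from \cite{ABN}, but it is convenient to recover it together with the variance by differentiating \textbf{(ABN)} at $x=1$. From $B_n(1)=f_n(1)B_{n-1}(1)=2nB_{n-1}(1)$ we get $B_n(1)=2^nn!$, and using $f_n(1)=2n$, $f_n'(1)=3n$, $f_n''(1)=2n$, $g_n(1)=0$, $g_n'(1)=-2$, $g_n''(1)=-6$, the first derivative gives, for $\mu_n:=B_n'(1)/B_n(1)$,
\[
\mu_n=\frac32+\frac{n-1}{n}\mu_{n-1},
\]
whose solution is $\mu_n=3(n+1)/4$ for $n\ge1$. Differentiating once more and dividing by $B_n(1)$ yields, for $s_n:=B_n''(1)/B_n(1)=\E D_n(D_n-1)$,
\[
s_n=\frac{n-2}{n}\,s_{n-1}+\frac{9(n-1)}{4}+1 .
\]
Its polynomial particular solution is $s_n=\tfrac{9}{16}n^2+\tfrac{25}{48}n-\tfrac1{24}$, and since the homogeneous part $C/(n(n-1))$ is pinned to zero by the value at $n=2$, this is exact for $n\ge2$. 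Combining, the leading $n^2$ terms cancel and
\[
\textrm{var}(D_n)=s_n+\mu_n-\mu_n^2=\frac{7(n+1)}{48}\longrightarrow\infty .
\]

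With $\E D_n=3(n+1)/4$ and $\textrm{var}(D_n)=7(n+1)/48\to\infty$ established, the central limit theorem invoked in Section~5 delivers exactly the stated convergence. I expect the only genuinely hard part to be the real--rootedness of $B_n(x)$, which is precisely what legitimizes the independent Bernoulli representation; everything after that is the bookkeeping of the two derivative recurrences above. A minor point to watch is that the closed forms for $\mu_n$ and $s_n$ become valid only from $n=1$ and $n=2$ on, respectively (the smallest cases sit slightly off the generic formulas), but this is immaterial for the asymptotic statement.
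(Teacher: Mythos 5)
Your proposal is correct and follows essentially the same route as the paper: take real--rootedness of $B_n(x)$ as given (the paper likewise defers it to the full version), use the Section~5 factorization of $D_n$ into independent Bernoulli variables plus the Lindeberg/Lyapunov CLT, and reduce everything to showing $\textrm{var}(D_n)=7(n+1)/48\to\infty$ by differentiating \textbf{(ABN)} twice at $x=1$ with $B_n(1)=2nB_{n-1}(1)$. The only cosmetic difference is bookkeeping — you solve the recurrence for the second factorial moment $s_n$ explicitly and then combine, while the paper substitutes $\E D_n=3(n+1)/4$ first and iterates the resulting recurrence $\textrm{var}(D_n)=\frac{n-2}{n}\textrm{var}(D_{n-1})+\frac{7}{16}$ directly; your closed forms and validity ranges check out.
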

Since \begin{math}(D_n)\end{math}  are random variables defined by 
\[\P(D_n=k)=\frac{B(n,k)}{\sum_{k\geq0}B(n,k)}=\frac{B(n,k)}{B_{n}(1)},\]
where \begin{math}(B_n(x))\end{math}  satisfy recurrence \textbf{(ABN)} it follows form our discussion that theorem will be proved once we show that the variance of \begin{math}D_n\end{math}  grows to infinity with
 \begin{math}n\end{math}  and that all roots of \begin{math}B_n(x)\end{math}  are real.  The precise statements are given it two propositions below.
\begin{proposition}\label{prop:var}
The variance of the number of diagonal cells in a random symmetric tree--like tableaux of size \begin{math}2n+1\end{math}  is,
\be\label{var_expl}
\textrm{var}(D_n)=\frac{7(n+1)}{48}.
\ee
\end{proposition}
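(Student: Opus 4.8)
The plan is to obtain $\textrm{var}(D_n)$ from the first two factorial moments of $D_n$ via
\[
\textrm{var}(D_n)=\E D_n(D_n-1)+\E D_n-(\E D_n)^2,
\]
together with the dictionary $\E D_n=B_n'(1)/B_n(1)$ and $\E D_n(D_n-1)=B_n''(1)/B_n(1)$. Since the first moment $\E D_n=3(n+1)/4$ is already available from~\cite{ABN}, the real work is the second factorial moment, for which I would build a recurrence for $B_n''(1)$ directly from \textbf{(ABN)}.

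First I would differentiate \textbf{(ABN)} $r$ times by the Leibniz rule and evaluate at $x=1$. Writing $f_n(x)=nx(x+1)$ and $g_n(x)=x(1-x^2)$, I record $f_n(1)=2n$, $f_n'(1)=3n$, $f_n''(1)=2n$ and $g_n(1)=0$, $g_n'(1)=-2$, $g_n''(1)=-6$. The key structural observation is that $g_n(1)=0$: in the expansion of $\big(g_n(x)B_{n-1}'(x)\big)^{(r)}$ the unique term containing $B_{n-1}^{(r+1)}$ carries the factor $g_n(1)$ and hence vanishes at $x=1$. Consequently $B_n^{(r)}(1)$ depends only on $B_{n-1}^{(j)}(1)$ with $j\le r$, the triple $\big(B_n(1),B_n'(1),B_n''(1)\big)$ closes under the recurrence, and no third-derivative information is required.

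Carrying this out for $r=0,1,2$ I expect $B_n(1)=2nB_{n-1}(1)$ (hence $B_n(1)=2^nn!$, consistent with the known count), then $B_n'(1)=(2n-2)B_{n-1}'(1)+3nB_{n-1}(1)$ (which reproduces $\E D_n=3(n+1)/4$), and finally
\[
B_n''(1)=(2n-4)B_{n-1}''(1)+(6n-6)B_{n-1}'(1)+2nB_{n-1}(1).
\]
Dividing the last line by $B_n(1)=2nB_{n-1}(1)$ and abbreviating $\nu_n:=\E D_n(D_n-1)$ turns it into the first-order linear recurrence
\[
\nu_n=\frac{n-2}{n}\,\nu_{n-1}+\frac{3(n-1)}{n}\,\E D_{n-1}+1 ,
\]
and substituting $\E D_{n-1}=3n/4$ gives $\nu_n=\tfrac{n-2}{n}\nu_{n-1}+\tfrac{9n-5}{4}$.

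To finish I would solve this recurrence. Because the coefficient $(n-2)/n$ vanishes at $n=2$, the solution for $n\ge2$ is pinned down by $\nu_2$ alone (computable directly from $B_2(x)=3x^3+4x^2+x$, giving $\nu_2=13/4$), and a short induction confirms $\nu_n=(n+1)(27n-2)/48$. Combining,
\[
\textrm{var}(D_n)=\frac{(n+1)(27n-2)}{48}+\frac{3(n+1)}{4}-\frac{9(n+1)^2}{16}
\]
collapses to $7(n+1)/48$, as claimed. The main obstacle is not conceptual but the careful Leibniz bookkeeping at $r=2$ and the ensuing algebra; a secondary point worth flagging is the boundary behaviour, since the clean identity holds for $n\ge2$ (the value $n=1$ is exceptional), which is all that the central limit theorem in Theorem~\ref{thm:clt} requires.
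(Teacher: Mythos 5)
Your proposal is correct and follows essentially the same route as the paper: differentiate \textbf{(ABN)} twice via Leibniz, use $g_n(1)=0$ to close the system at $x=1$ (your recurrence $B_n''(1)=2nB_{n-1}(1)+(6n-6)B_{n-1}'(1)+(2n-4)B_{n-1}''(1)$ is exactly the paper's), and combine with the known mean $\E D_{n-1}=3n/4$; the only cosmetic difference is that you solve the recurrence at the level of the second factorial moment $\nu_n=(n+1)(27n-2)/48$ and then assemble the variance, whereas the paper first converts it into $\textrm{var}(D_n)=\frac{n-2}{n}\textrm{var}(D_{n-1})+\frac{7}{16}$ and solves that. Your boundary remark is a correct (minor) refinement the paper leaves implicit: since the argument uses $\E D_{n-1}=3n/4$, which fails at $n-1=0$, the identity $\textrm{var}(D_n)=7(n+1)/48$ holds for $n\ge2$ but not at $n=1$ (where $\textrm{var}(D_1)=1/4\ne 7/24$), which is harmless for Theorem~\ref{thm:clt}.
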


\begin{proposition}\label{prop:roots}
For all \begin{math}n\geq0\end{math}, the polynomial \begin{math}B_{n}(x)\end{math} 
\begin{itemize}
\item[a)]{} has degree \begin{math}n+1\end{math}  with all coefficients non-negative, and
\item[b)]{} all roots real and in the interval \begin{math} [-1,0]\end{math}.
\end{itemize}
\end{proposition}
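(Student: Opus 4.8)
The plan is to prove both parts by induction on $n$, using the explicit coefficient recurrence for part (a) and an interlacing argument anchored at the two fixed roots $x=0$ and $x=-1$ for part (b). For part (a), I would first extract from \textbf{(ABN)} the recurrence for the coefficients $B(n,j)$. Collecting powers of $x$ in $nx(x+1)B_{n-1}(x)+x(1-x^2)B'_{n-1}(x)$ yields
\[
B(n,j)=(n-j+2)\,B(n-1,j-2)+n\,B(n-1,j-1)+j\,B(n-1,j).
\]
At $j=n+2$ the factor $n-j+2$ vanishes, which is exactly the cancellation of the two degree-$(n+2)$ leading terms; hence $\deg B_n=n+1$, and the coefficient of $x^{n+1}$ equals $n\,B(n-1,n)+B(n-1,n-1)>0$. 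For every $j\le n+1$ one has $n-j+2\ge1$, so all three terms on the right are nonnegative provided $B_{n-1}$ has nonnegative coefficients, which closes the induction; the base case $B_0(x)=x$ is immediate.

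For part (b), the crucial starting observation is that both $x=0$ and $x=-1$ are roots of $B_n$ for every $n\ge1$, since the factors $x(x+1)$ and $x(1-x^2)$ in \textbf{(ABN)} both vanish there. The inductive hypothesis I would carry is that $B_{n-1}$ has $n$ \emph{distinct} roots $0=\rho_0>\rho_1>\dots>\rho_{n-1}=-1$ in $[-1,0]$, with the small cases $B_1(x)=x(x+1)$ and $B_2(x)=x(x+1)(3x+1)$ checked by hand. The engine of the step is to evaluate $B_n$ at the roots of $B_{n-1}$: at an interior root $\rho_i$ the recurrence collapses to
\[
B_n(\rho_i)=\rho_i\,(1-\rho_i^2)\,B'_{n-1}(\rho_i).
\]
Because $\rho_i\in(-1,0)$, the prefactor $\rho_i(1-\rho_i^2)$ is strictly negative, while $B'_{n-1}(\rho_i)$ alternates in sign as $i$ increases (simple roots). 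Thus $B_n$ alternates in sign across the interior roots of $B_{n-1}$, producing one root of $B_n$ in each of the $n-3$ gaps between consecutive interior roots.

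To recover the remaining two roots I would pin down the behaviour of $B_n$ at the endpoints. From the coefficient recurrence one gets $B'_n(0)=B(n,1)=1$, so $B_n$ is negative just to the left of $0$; combined with $B_n(\rho_1)>0$ this forces a root in $(\rho_1,0)$. At the other end, differentiating \textbf{(ABN)} and using $B_{n-1}(-1)=0$ gives the sub-recurrence $B'_n(-1)=-2\,B'_{n-1}(-1)$, whence $B'_n(-1)=(-1)^n 2^{n-1}$; matching its sign against that of $B_n(\rho_{n-2})$ produces a root in $(-1,\rho_{n-2})$. Counting $0$, $-1$, the two endpoint-adjacent roots, and the $n-3$ interior roots gives $n+1$ distinct real roots in $[-1,0]$, which by part (a) exhaust the roots, so all are simple and the induction hypothesis is restored. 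The main obstacle is precisely this endpoint bookkeeping: the interior interlacing is a routine Rolle-type argument, but recovering the exact count $n+1$ forces one to compute the signs of $B'_n(0)$ and $B'_n(-1)$ and to reconcile the degenerate small cases ($n=2,3$, where there are zero or one interior roots) with the general scheme.
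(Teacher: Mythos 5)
Your proposal is correct, but there is nothing in this paper to compare it against line by line: the authors explicitly omit the proof of Proposition~\ref{prop:roots}, deferring it to the full version and saying only that it can be obtained by modifying the interlacing method of \cite{DDJ}, which in its stated form does not apply to \textbf{(ABN)} because $f_n(x)=nx(x+1)$ and $g_n(x)=x(1-x^2)$ have degrees $2$ and $3$. Your argument is precisely such a modification, carried out concretely and self-containedly: the extra degrees of $f_n$ and $g_n$ are absorbed by the fact that $x=0$ and $x=-1$ are fixed roots of every $B_n$ ($n\ge1$), and the interlacing induction then only has to produce the remaining $n-1$ roots. I checked the details and they hold up: the coefficient recurrence $B(n,j)=(n-j+2)B(n-1,j-2)+nB(n-1,j-1)+jB(n-1,j)$ is right, the top coefficient $B(n-1,n-1)+nB(n-1,n)$ is positive by induction, the sign of $B_n(\rho_i)=\rho_i(1-\rho_i^2)B'_{n-1}(\rho_i)$ at an interior root is $(-1)^{i+1}$ (since the positive leading coefficient forces $B'_{n-1}(\rho_i)$ to have sign $(-1)^i$), and this matches your endpoint computations $B'_n(0)=B(n,1)=1$ (using $B(n-1,0)=0$) and $B'_n(-1)=-2B'_{n-1}(-1)$, whence $B'_n(-1)=(-1)^n2^{n-1}$ — note the latter recurrence needs $B_{n-1}(-1)=0$ and so is valid only for $n\ge2$, with base $B'_1(-1)=-1$, which your formula accommodates. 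Counting $0$, $-1$, the two endpoint-adjacent roots, and the $n-3$ gap roots gives $n+1$ distinct roots in $[-1,0]$, exhausting the degree from part~(a), so simplicity is restored and the induction closes; your flagged degenerate case $n=2$ (where $B_1=x(x+1)$ has no interior roots, so the general step is vacuous) is genuinely necessary and correctly handled by the explicit factorization $B_2(x)=x(x+1)(3x+1)$. Compared with what the authors indicate, your route trades the generality of the \cite{DDJ} machinery for an elementary, hands-on induction tailored to \textbf{(ABN)}; what the abstract criteria would buy is reusability across other recurrences, while your proof has the advantage of yielding the sharper structural conclusions (all roots simple, interlacing of $B_{n-1}$ and $B_n$, and the exact endpoint derivatives) essentially for free.
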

Because of the space limitation we will include here a proof of Proposition~\ref{prop:var} only and we refer the reader to the full version of the paper for the proof of Proposition~\ref{prop:roots}.  

\begin{proof}[
 of Proposition~\ref{prop:var}]
First we will calculate the second factorial moment of \begin{math}D_{n}\end{math}. 
Differentiating the recurrence \textbf{(ABN)} twice and evaluating at \begin{math}x=1\end{math}  yields
\[B_n''(1)=2nB_{n-1}(1)+6(n-1)B_{n-1}'(1)+2(n-2)B_{n-1}''(1).\]
Furthermore, since
\[B_n(1)=2nB_{n-1}(1)\]
and 
\be\label{var}\textrm{var}(D_{n})=\E(D_{n})_2-\E^2D_{n}+\E D_{n}
\ee
we obtain
\beq*
\E(D_{n})_2&=&\frac{B^{''}_{n}(1)}{B_{n}(1)}
=\frac{2nB_{n-1}(1)+6(n-1)B_{n-1}'(1)+2(n-2)B_{n-1}''(1)}
{2nB_{n-1}(1)}\\
&=&1+\frac{3(n-1)}{n}\E D_{n-1}+\frac{n-2}{n}\E(D_{n-1})_2
\\
&=&1+\frac{3(n-1)}{n}\E D_{n-1}+\frac{n-2}{n}\left(\textrm{var}(D_{n-1})+\E^2D_{n-1}-\E D_{n-1}\right)
\\
&=&1+\frac{n-2}{n}\textrm{var}(D_{n-1})+\frac{n-2}{n}\E^2D_{n-1}+\left(\frac{2n-1}{n}\right)\E D_{n-1}.
\eeq*
Now, using $\E D_{n}=3(n+1)/4$ (as computed from {\bf(ABN)} in \cite[Proposition~19]{ABN}) and (\ref{var}) we obtain
\beq*
\textrm{var}(D_{n})&=&1+\frac{n-2}{n}\textrm{var}(D_{n-1})+\frac{n-2}{n}\left(\frac{3n}4\right)^2+\frac{2n-1}{n}\frac{3n}4\\&&\qquad-\left(\frac{3(n+1)}{4}\right)^2+\frac{3(n+1)}{4}\\
&=&\frac{n-2}{n}\textrm{var}(D_{n-1})+\frac{7}{16}.
\eeq*
This recurrence is easily solved (see e.~g. \cite[Section~2.2]{GKP}) and yields (\ref{var_expl}) completing the proof of Proposition~\ref{prop:var} and Theorem~\ref{thm:clt}.
\end{proof}

\begin{remark} The representation of \begin{math}D_n\end{math}  as the sum of independent indicator random variables implies that a  local limit theorem holds too. Specifically,
using 
\begin{math}\E D_n=3(n+1)/4\end{math}  and \begin{math}\textrm{var}(D_n)=7(n+1)/48\end{math}  
we have that
\[\P(D_n=k)=\frac{2\sqrt6}{\sqrt{7\pi(n+1)}}\left(\exp\left(-\frac{24(k-3(n+1)/4)^2}{7(n+1)}\right)+o(1)\right)\]
holds uniformly over \begin{math}k\end{math}  as \begin{math}n\to\infty\end{math}.
We refer to \cite[Theorem~2.7 and a discussion of its proof in Section~5]{HJ}  for more detailed explanation and to \cite[Theorem~VII.3]{P} for a general statement of a local limit theorem.
\end{remark}

\section{Conclusion}

We have considered recurrences for generating polynomials of sequences of integer valued random variables  and tried to use these recurrences to identify  the distributional limits of the associated sequences of random variables. Some examples lead to Poisson limits, some other to Gaussian limits.  In particular, we established the asymptotic normality for the number of diagonal cells in the random tree--like tableaux by verifying that the generating polynomials have only real roots and that the variance tends to infinity with $n$. However, there seem to be lack of general criteria that would allow one to find the limiting distribution of the underlying sequence of random variables directly from the recurrences of the form (\ref{rec}) or (\ref{rec_prime}). For example, the limiting distribution of the random variables associated with the recurrence  {\bf(AH)} is neither Poisson nor normal. In fact, as have been shown in \cite[Section~3]{AH} if \begin{math}(X_n)\end{math}  is a sequence of random variables associated with the recurrence {\bf(AH)} through (\ref{rv}) then 
\[\frac{X_n}{2\sqrt n}\stackrel d\longrightarrow X,\]
where \begin{math}X\end{math}  is a random variable with the probability density function \begin{math}2xe^{-x^2}\end{math}  if \begin{math}x\ge0\end{math}  and is 0 otherwise. However, it is not clear how to see it 
from the recurrence {\bf(AH)}. Factorial moments
satisfy 
\begin{eqnarray*}\E{(X_{n})_r}&=&
\frac{2n-1+r}{2n-1}\E{(X_{n-1})_r}+\frac{r(r-1)}{2n-1}\E{(X_{n-1})_{r-1}}
\end{eqnarray*}
and one can get from there
\[
\E{X_{n}}=\frac{2n}{2n-1}\E{X_{n-1}} =
\frac{(2n)!!}{(2n-1)!!}=\frac{2^{2n}}{{2n\choose n}}\sim\sqrt{\pi n} 
\]
and
\[
\textrm{var}(X_{n})= (4-\pi)n+O(\sqrt n).\]
In principle, higher moments can be found too. For example 
\[
\E{(X_{n})_3}=6\left(\frac{\sqrt\pi(n+2)n!}{\Gamma(n+1/2)}-4n-3\right)\sim6\sqrt\pi n^{3/2}
\]
but the computations become increasingly more complicated. Even the asymptotic behavior of the first two moments is not immediately clear from the recurrence {\bf(AH)}.

Thus,  it seems worthwhile to further study the recurrences like  (\ref{rec_prime}) and (\ref{rec}) to obtain a more comprehensive picture of their probabilistic consequences.

\bibliographystyle{abbrvnat}
%

%
%

\bibliography{poly}
\label{sec:biblio}

\end{document}